\newtheorem{theorem}{Theorem}[section]
\newtheorem{proposition}[theorem]{Proposition}
\newtheorem{corollary}[theorem]{Corollary}
\theoremstyle{definition}
\newtheorem{example}[theorem]{Example}
\def\B{\mathcal B}
\def\DP{\mathcal D}
\def\bij{\phi}
\DeclareMathOperator\area{area}
\DeclareMathOperator\pk{pk}
\DeclareMathOperator\val{val}
\renewcommand\sp{\operatorname{sp}}
\renewcommand\sl{\operatorname{sl}}
\DeclareMathOperator\ob{ob}
\DeclareMathOperator\eb{eb}
\DeclareMathOperator\height{height}
\newcommand\iniU{\operatorname{ini}_U}
\newcommand\finD{\operatorname{fin}_D}
\newcommand\iniu{\operatorname{ini}_u}
\newcommand\find{\operatorname{fin}_d}
\DeclareMathOperator\ret{ret}
\DeclareMathOperator\shp{shp}
\DeclareMathOperator\shv{shv}
\newcommand{\uph}{{\uparrow}}
\newcommand{\downh}{{\downarrow}}
\def\U{-- ++(1,1) circle(1.2pt)}
\def\D{-- ++(1,-1) circle(1.2pt)}
\def\H{-- ++(1,0) circle(1.2pt)}
\def\N{-- ++(0,1) circle(1.2pt)}
\def\S{-- ++(0,-1) circle(1.2pt)}
\title{A bijection between bargraphs and Dyck paths}
\author{Emeric Deutsch\thanks{NYU Tandon School of Engineering, Brooklyn, NY 11201, USA.} \and Sergi Elizalde\thanks{Department of Mathematics, Dartmouth College, Hanover, NH 03755, USA. {\tt sergi.elizalde@dartmouth.edu}. Partially supported by grant \#280575 from the Simons Foundation.}
}
\date{}
\begin{document}

\maketitle

\begin{abstract}
Bargraphs are a special class of convex polyominoes. They can be identified with lattice paths with unit steps north, east, and south that start at the origin, end on the $x$-axis, and stay strictly above the $x$-axis everywhere except at the endpoints. 
Bargraphs, which are used to represent histograms and to model polymers in statistical physics, have been enumerated in the literature by semiperimeter and by several other statistics, using different methods such as the wasp-waist decomposition of Bousquet-M\'elou and Rechnitzer,
and a bijection with certain Motzkin paths.

In this paper we describe an unusual bijection between bargraphs and Dyck paths, and study how some statistics are mapped by the bijection. 
As a consequence, we obtain a new interpretation of Catalan numbers, as counting bargraphs where the semiperimeter minus the number of peaks is fixed.
\end{abstract}


\noindent\textit{Keywords:} bargraph, bijection, Dyck path, Catalan number

\section{Introduction}

A bargraph is a lattice path with steps $U=(0,1)$, $H=(1,0)$ and $D=(0,-1)$ that starts at the origin and ends on the $x$-axis, stays strictly above the $x$-axis everywhere except at the endpoints, and has no pair of consecutive steps of the form $UD$ or $DU$. Sometimes it is convenient to identify a bargraph with the corresponding word on the alphabet $\{U,D,H\}$, and sometimes with the sequence of heights (i.e., $y$-coordinates) of the $H$ steps of the path, thus interpreting the bargraph as a composition.

Bargraphs appear in the literature with different names. 
As a special class of convex polyominoes, they have been studied and enumerated by 
Bousquet-M\'elou and Rechnitzer~\cite{BMR}, Prellberg and Brak~\cite{PB}, and Fereti\'c~\cite{Fer}.
The distribution of several bargraph statistics has been given in a series of papers by 
Blecher, Brennan, Knopfmacher and Prodinger~\cite{BBK_levels,BBK_peaks,BBK_parameters,BBK_walls,BBKP}. All of the above papers rely on a decomposition of bargraphs that is used to obtain equations satisfied by the corresponding generating functions.
Recently, the enumeration of bargraphs with respect to some additional statistics has been obtained in~\cite{DE} by using a simple bijection between bargraphs and Motzkin paths without peaks or valleys, together with the recursive structure of Motzkin paths.

The {\em semiperimeter} of a bargraph $B$ is defined as the number of $U$ steps plus the number of $H$ steps, and denoted by $\sp(B)$. It coincides with the semiperimeter of the closed polyomino determined by the bargraph and the $x$-axis.
Let $\B$ denote the set of bargraphs, and let $\B_n$ denote those of semiperimeter $n$. 
An important bargraph statistic in this paper is the {\em number of peaks}, which are occurrences of $UH^\ell D$ for some $\ell\ge1$.

A Dyck path is a lattice path with steps $u=(1,1)$ and $d=(1,-1)$ that starts at the origin, ends on the $x$-axis, and stays weakly above the $x$-axis. Let $\DP$ denote the set of Dyck paths, and let $\DP_n$ denote those of semilength $n$, where the semilength of a path $P$ is defined as half of the number of steps, and denoted by $\sl(P)$. 
It is well known that $|\DP_m|=C_m=\frac{1}{m+1}\binom{2m}{m}$, the $m$th Catalan number.

In Section~\ref{sec:bij} we introduce a bijection between bargraphs and Dyck paths. An unusual feature of our bijection is that it does not preserve ``size'' under the natural definitions, such as semiperimeter or semilength. Instead, the statistics preserved by the bijection are more subtle, as discussed in Section~\ref{sec:properties}. As a consequence of the bijection, we obtain a new interpretation of the Catalan numbers in terms of bargraphs. An alternative proof of this result, using generating functions, is given in Section~\ref{sec:gf}.

\section{The bijection}\label{sec:bij}

In this section we define a bijection $\bij$ from Dyck paths to bargraphs, with the property that the semilength of a Dyck path becomes the semiperimeter minus the number of peaks of the corresponding bargraph.

Given a Dyck path $P$, define the height of each step of $P$ as the $y$-coordinate of its highest point. Consider the sequence of heights of the steps of $P$. To construct the bargraph $\bij(P)$ from this sequence, turn each maximal block of $c$ consecutive letters $h$ into $\lfloor\frac{c}{2}\rfloor$ columns of height~$h$. 
 
For example, the Dyck path in Figure~\ref{fig:bij} has height sequence ${\color{blue}11111}{\color{red}2}{\color{green}3333}{\color{red}2222}{\color{green}3333}{\color{red}222}{\color{blue}1111}{\color{red}22}{\color{blue}1}$, and thus its corresponding bargraph has column heights ${\color{blue}11}{\color{green}33}{\color{red}22}{\color{green}33}{\color{red}2}{\color{blue}11}{\color{red}2}$.

\begin{figure}[htb]
\centering
    \begin{tikzpicture}[scale=0.55]
     \draw[blue](0,6) circle (1.2pt)  \U\D\U\D\U coordinate(last);  
     \draw[red](last) \U coordinate(last);  
     \draw[green](last) \U\D\U\D coordinate(last);  
     \draw[red](last) \D\U\D\U coordinate(last);  
     \draw[green](last) \U\D\U\D coordinate(last);  
     \draw[red](last)\D\U\D coordinate(last);  
     \draw[blue](last) \D\U\D\U coordinate(last);  
     \draw[red](last)\U\D coordinate(last);  
     \draw[blue](last)\D;  
     \draw[thin,dotted](0,6)--(28,6);
     \draw(14,4.7) node {$\downarrow$};
     \draw(8,0) circle(1.2pt)  \N coordinate(last);  
     \draw[blue](last) \H\H coordinate(last);  
     \draw(last)\N\N coordinate(last);  
     \draw[green](last) \H\H coordinate(last);  
     \draw(last)\S coordinate(last);  
     \draw[red](last)\H\H coordinate(last);  
     \draw(last) \N coordinate(last);  
     \draw[green](last) \H\H coordinate(last);  
     \draw(last) \S coordinate(last);  
     \draw[red](last) \H coordinate(last);  
     \draw(last) \S coordinate(last);  
     \draw[blue](last) \H\H coordinate(last);  
     \draw(last) \N coordinate(last);  
     \draw[red](last) \H coordinate(last);  
     \draw(last) \S\S; 
     \draw[thin,dotted](8,0)--(20,0);
    \end{tikzpicture}
   \caption{A Dyck path and its corresponding bargraph.}\label{fig:bij}
\end{figure}
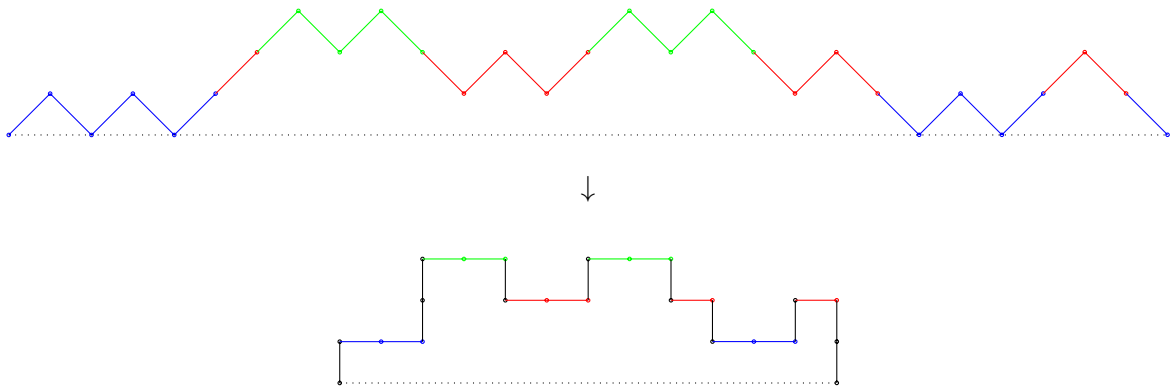

\medskip

Next we describe the inverse. Consider a bargraph described by a sequence of column heights $h_1^{a_1}h_2^{a_2}\dots h_r^{a_r}$.
By inserting terms with $a_j=0$ if necessary, we can assume that $h_1=h_r=1$ and $|h_{i+1}-h_i|=1$ for all $i$. Define $h_0=h_{r+1}=0$ by convention. To construct a Dyck path, for $i=1,2,\dots,r$, we replace $h_i^{a_i}$ with
$$\begin{cases}  
(ud)^{a_i} & \mbox{if $h_{i-1}<h_i>h_{i+1}$}, \\
(ud)^{a_i}u & \mbox{if $h_{i-1}<h_i<h_{i+1}$}, \\
(du)^{a_i}d & \mbox{if $h_{i-1}>h_i>h_{i+1}$},\\
(du)^{a_i} & \mbox{if $h_{i-1}>h_i<h_{i+1}$}.
\end{cases}$$

For example, the bargraph with column heights ${\color{blue}11}{\color{green}33}{\color{red}22}{\color{green}33}{\color{red}2}{\color{blue}11}{\color{red}2}$ is first written as ${\color{blue}1^2} {\color{red}2^0} {\color{green}3^2} {\color{red}2^2} {\color{green}3^2} {\color{red}2^1} {\color{blue}1^2} {\color{red}2^1} {\color{blue}1^0}$, which then becomes
${\color{blue}(ud)^2u}{\color{red}u}{\color{green}(ud)^2}{\color{red}(du)^2}{\color{green}(ud)^2}{\color{red}(du)d}{\color{blue}(du)^2}{\color{red}(ud)}{\color{blue}d}$.

\section{Properties of $\bij$}\label{sec:properties}

Next we show how some statistics on Dyck paths are mapped by the bijection $\bij$ into statistics on  bargraphs. 
We start by defining some statistics on bargraphs and Dyck paths.

For a bargraph $B$, denote by $\#H(B)$, $\#U(B)$ and $\#D(B)$ the number of $H$, $U$ and $D$ steps in $B$, respectively. 
Note that, by definition, $\sp(B)=\#H(B)+\#U(B)$. The {\em area} of the region under a bargraph $B$ and above the $x$-axis is denoted by $\area(B)$.

A {\em peak} (resp. {\em valley}) in a bargraph is an occurrence of $UH^\ell D$ (resp. $DH^\ell U$) for some $\ell\ge1$. Denote by $\pk(B)$ the number of peaks of $B$, and by $\val(B)$ its number of valleys. Note that $\pk(B)=\val(B)+1$, since peaks and valleys in a bargraph alternate, always starting and ending with a peak. Let $\shv(B)$ be the sum of the heights of the valleys of $B$, where the height of a valley is the $y$-coordinate of its $H$ steps.
Denote by $\#H_1(B)$ the number of $H$ steps of $B$ at height $1$ (equivalently, number of columns of $B$ of height $1$).

A {\em peak} in a Dyck path is an occurrence of $ud$, and a {\em return} is a $d$ step that ends on the $x$-axis. For $P\in\DP$, let $\Lambda(P)$ denote its number of peaks, and $\ret(P)$ its number of returns. Defining the height of a peak as the $y$-coordinate of its highest point, let $\shp(P)$ denote the sum of the heights of the peaks of $P$.

Define the height of a Dyck path or a bargraph to be the maximum $y$-coordinate that it reaches. Denote by $\iniu(P)$ (resp. $\find(P)$) the number of initial $u$ steps (resp. final $d$ steps) in a Dyck path $P$, and define $\iniU(B)$ and $\finD(B)$ on a bargraph $B$ similarly.

Table~\ref{tab:stat} summarizes the notation that we use for statistics on bargraphs and Dyck paths.

\begin{table}[htb]
\centering
\begin{tabular}{c|c||c|c}
\multicolumn{2}{c||}{Statistics on bargraphs} & \multicolumn{2}{c}{Statistics on Dyck paths} \\
\hline \hline
$\sp$ & semiperimeter & $\sl$ & semilength \\
\hline
$\pk$ & number of peaks & $\Lambda$ & number of peaks \\
\hline
$\val$ & number of valleys & $\ret$ & number of returns \\
\hline
$\shv$ & sum of heights of valleys & $\shp$ & sum of heights of peaks \\
\hline
$\iniU$ & number of initial $U$ steps & $\iniu$ & number of initial $u$ steps \\
\hline
$\finD$ & number of final $D$ steps & $\find$ & number of final $d$ steps \\
\hline
$\#H,\#U,\#D$ & number of $H,U,D$ steps & $\eb$ & number of even blocks in height sequence \\
\hline 
$\#H_1$ & number of $H$ steps at height $1$ & $\ob$ & number of odd blocks in height sequence

\end{tabular}
\caption{Notation for statistics on bargraphs and Dyck paths}
\label{tab:stat}
\end{table}

\begin{example}
The bargraph $B$ in Figure~\ref{fig:bij} has $\sp(B)=17$, $\#H(B)=12$, $\#U(B)=\#D(B)=5$, $\area(B)=24$, $\pk(B)=3$, $\shv(B)=3$, $\#H_1(B)=4$, $\height(B)=3$, $\iniU(B)=1$, $\finD(B)=2$.
\end{example}

\begin{theorem}\label{thm:bijstat}
Let $P\in\DP$ and let $B=\bij(P)\in\B$. Then 
\begin{enumerate}[(a)]
\item $\sl(P)=\sp(B)-\pk(B)$,
\item $\Lambda(P)=\#H(B)-\val(B)$,
\item $\shp(P)=\area(B)-\shv(B)$,
\item $\height(P)=\height(B)$,
\item $\iniu(P)=\iniU(B)$,
\item $\find(P)=\finD(B)$.
\item $\ret(P)=\#H_1(B)+1$ (unless $P$ and $B$ have height $1$, in which case $\ret(P)=\#H_1(B)$).
\end{enumerate}
\end{theorem}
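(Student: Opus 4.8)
The plan is to reduce every part to a single structural analysis of the height sequence of $P$. Write this sequence as a concatenation of maximal blocks, and let block $j$ have common height $H_j$ and length $c_j$, so that $\bij$ turns it into $a_j=\lfloor c_j/2\rfloor$ columns of height $H_j$; set $H_0=H_{R+1}=0$. The first facts to record are that consecutive blocks satisfy $|H_{j+1}-H_j|=1$, that each block is an alternating word $udud\cdots$ or $dudu\cdots$ (a maximal run of equal heights in a Dyck path consists of peaks $ud$ and valleys $du$ stacked at one level, since the two steps of any $ud$ or $du$ share the same height), and that a block's first (resp.\ last) letter is $u$ iff the previous (resp.\ next) block is lower. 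I would then classify each block by $(H_{j-1},H_{j+1})$ as a peak, valley, up-through, or down-through block, matching the four cases of the inverse map, and deduce the key parities: peak and valley blocks have even length, through blocks odd length. I would also note that a peak block always has $a_j\ge1$ and is a genuine peak of $B$ (so $\pk(B)$ is the number of peak blocks and $\val(B)$ the number of valley blocks), that the excursion $H_0,\dots,H_{R+1}$ forces $\#\{\text{peak blocks}\}=\#\{\text{valley blocks}\}+1$, and that $R+1=2\,\#\{\text{peak blocks}\}+\#\{\text{odd blocks}\}$.

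With this dictionary, (b) and (c) fall out of one count: every $ud$ of $P$ lies inside a single block, and the number of $ud$'s in block $j$ is $a_j$, except $a_j-1$ for a valley block. Summing gives $\Lambda(P)=\sum_j a_j-\#\{\text{valley blocks}\}=\#H(B)-\val(B)$, which is (b); weighting each $ud$ by its height $H_j$ gives $\shp(P)=\sum_j a_jH_j-\sum_{\text{valley}}H_j=\area(B)-\shv(B)$, which is (c), using $\area(B)=\sum_j a_jH_j$ and $\shv(B)=\sum_{\text{valley}}H_j$.

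For (a) the only missing ingredient is $\#U(B)$. Since the outline of $B$ rises from $0$ and returns to $0$, its number of up-steps equals $\sum_{\text{peak}}H_j-\sum_{\text{valley}}H_j$; then, writing $\max(H_{i-1},H_i)=(H_{i-1}+H_i+1)/2$ and telescoping over the excursion (with $H_0=H_{R+1}=0$), this collapses to $\#U(B)=(R+1)/2$. Combining $\#U(B)=(R+1)/2$, $\#H(B)=\sum_j\lfloor c_j/2\rfloor=\sl(P)-\tfrac12\#\{\text{odd blocks}\}$, and $\pk(B)=(R+1-\#\{\text{odd blocks}\})/2$ makes the height-dependent terms cancel and yields $\sp(B)-\pk(B)=\sl(P)$.

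The remaining parts are localized. For (d), the maximal $H_j$ occurs at a peak block, which has $a_j\ge1$, so $\height(P)=\max_jH_j=\max_{a_j\ge1}H_j=\height(B)$. For (e), an initial run $u^p$ in $P$ makes the blocks at heights $1,\dots,p-1$ singletons ($a_j=0$) and the block at height $p$ the first with $a_j\ge1$, so $\iniu(P)=p=\iniU(B)$; part (f) then follows by applying (e) to the reversal of $P$, after checking that $\bij$ commutes with the reversal that reverses a path and swaps $u\leftrightarrow d$. For (g), each arch of $P$ contributes exactly two height-$1$ steps (its first $u$ and last $d$), so $\ret(P)=\tfrac12\sum_{\text{height-}1}c_j$, while $\#H_1(B)=\sum_{\text{height-}1}\lfloor c_j/2\rfloor=\ret(P)-\tfrac12\#\{\text{odd height-}1\text{ blocks}\}$; the height-$1$ blocks are the first (up-through), the last (down-through), and interior valleys, so there are exactly two odd ones unless $P$ has a single block (height $1$, a peak), giving the stated $\ret(P)=\#H_1(B)+1$ with its exception. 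I expect the main obstacle to be the consistent bookkeeping of the length-$1$ through blocks, which contribute no columns yet carry the parity information driving the cancellation in (a) and the $+1$ in (g); the delicate point throughout is tracking how odd-length blocks enter each identity, and in (a) verifying that the telescoping truly removes all dependence on the heights $H_j$.
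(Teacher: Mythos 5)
Your proposal is correct and follows essentially the same approach as the paper: decompose the height sequence of $P$ into maximal constant-height blocks, classify them as peak/valley/through blocks via the signs of the surrounding height changes, and exploit the resulting parity facts to prove each identity. The only differences are cosmetic bookkeeping choices (computing $\#U(B)$ by telescoping maxima rather than counting height changes directly, invoking reversal symmetry for part (f), and counting height-$1$ steps arch by arch in part (g)), all of which check out.
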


\begin{proof}
Let us first prove part (a).
The height sequence of $P$ can be decomposed into maximal blocks of steps of the same height. Let $\eb(P)$ be the number of even blocks, and let $\ob(P)$ be the number of odd blocks. In the construction of $B=\bij(P)$, each block of size $c$ produces $\lfloor\frac{c}{2}\rfloor$ horizontal steps in the bargraph. Thus, the number of $H$ steps in $B$ equals $\#H(B)=\sl(P)-\frac{\ob(P)}{2}$.

Additionally, the height changes by one from each block to the next, which translates into a $U$ or a $D$ in the bargraph. Thus $\#U(B)+\#D(B)=\eb(P)+\ob(P)+1$. More specifically, the two consecutive height changes occurring at the beginning and at the end of a given block (with the convention that the height is $0$ outside the path) have the same sign if and only if the block has odd length.

For example, in the path of Figure~\ref{fig:bij}, denoting by $\uph$ and $\downh$ height changes of $+1$ and $-1$ respectively, we get
$\uph{\color{blue}11111}\uph{\color{red}2}\uph{\color{green}3333}\downh{\color{red}2222}\uph{\color{green}3333}\downh{\color{red}222}\downh{\color{blue}1111}\uph{\color{red}22}\downh{\color{blue}1}\downh$. In particular, the number of even blocks equals the number of peaks plus the number of valleys (which in turn equals the number of peaks minus one), and so $\eb(P)=2\pk(B)-1$.

Putting the above observations together,
\begin{align*}\sp(B)&=\#H(B)+\frac{\#U(B)+\#D(B)}{2}=\sl(P)-\frac{\ob(P)}{2}+\frac{\eb(P)+\ob(P)+1}{2}\\
&=\sl(P)+\frac{\eb(P)+1}{2}=\sl(P)+\pk(B),
\end{align*}
as desired.

To prove parts (b) and (c), note that in the above decomposition of $P$ into maximal blocks of steps of the same height, all peaks of $P$ occur within the blocks, since the two steps in a peak $UD$ have the same height. Additionally, the height of such a peak coincides with the height of the block containing it, and thus with the height of the corresponding columns in the bargraph. Each odd block in $P$ of size $2i+1$ contains $i$ peaks. Similarly, each even block of size $2i$ contains $i$ peaks, unless the two blocks surrounding it are higher than the block itself, in which case it contains $i-1$ peaks. Blocks surrounded by higher blocks correspond to valleys of $B$. It follows that
$\Lambda(P)=\sl(P)-\frac{\ob(P)}{2}-\val(B)=\#H(B)-\val(B)$. Additionally, since the area of a bargraph is the sum of the heights of its columns, we get $\shp(P)=\area(B)-\shv(B)$.

Parts (d), (e) and (f) follow immediately from the construction of $\bij$.

Finally, to prove part (g), note that all returns of $P$ occur in blocks of height $1$. If $P$ (equivalently, $B$, by part~(c)) has height $1$, then it is clear that $\ret(P)=\sl(P)=\#H_1(B)$. If $P$ has height at least $2$, then the blocks of height $1$ are the first and last block, which are necessarily odd, plus possibly some even blocks. If the first block has length $2i+1$ (for some $i$), it contributes $i$ returns to $P$ and $i$ $H$ steps at height $1$ to $B$. If the last block has length $2i+1$, it contributes $i+1$ returns to $P$ and $i$ $H$ steps at height $1$ to $B$.
Any even block at height $1$ of length $2i$ contributes $i$ returns to $P$ and $i$ $H$ steps at height $1$ to $B$. The equality $\ret(P)=\#H_1(B)+1$ follows.
\end{proof}

The following is an immediate consequence of Theorem~\ref{thm:bijstat}(a), giving a new interpretation of the Catalan numbers.

\begin{corollary}\label{cor:Catalan}
$$|\{B\in\B: \sp(B) - \pk(B) = m\}|=C_m.$$ 
\end{corollary}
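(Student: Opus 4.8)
The plan is to derive the corollary directly from the bijectivity of $\bij$ together with the statistic identity of Theorem~\ref{thm:bijstat}(a), without any further computation. The essential point is that the single stated equality $\sl(P)=\sp(B)-\pk(B)$ is precisely what is needed to match Dyck paths of a fixed semilength with bargraphs having a fixed value of $\sp-\pk$.

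First I would recall that $\bij\colon\DP\to\B$ is a genuine bijection: Section~\ref{sec:bij} does not merely define $\bij$ but also exhibits an explicit inverse, sending a sequence of column heights back to a height sequence and hence to a Dyck path, so every bargraph is the image of exactly one Dyck path. This is the only substantive ingredient, and it has already been established. I would then invoke Theorem~\ref{thm:bijstat}(a), which states that for every $P\in\DP$ the image $B=\bij(P)$ satisfies $\sl(P)=\sp(B)-\pk(B)$.

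Combining these two facts, I would argue that $\bij$ restricts to a bijection between $\DP_m=\{P\in\DP:\sl(P)=m\}$ and $\{B\in\B:\sp(B)-\pk(B)=m\}$: the identity of part~(a) guarantees that $\bij$ maps the former set into the latter, and since $\bij$ is a bijection on all of $\DP$, its inverse maps the latter set into the former, so the restriction is onto and one-to-one. Hence the two sets have equal cardinality, and since $|\DP_m|=C_m$ as recalled in the introduction, the equality $|\{B\in\B:\sp(B)-\pk(B)=m\}|=C_m$ follows. I expect no genuine obstacle here: all the real work lies in the construction of the inverse in Section~\ref{sec:bij} and in Theorem~\ref{thm:bijstat}(a), so the corollary is indeed immediate once those are in hand.
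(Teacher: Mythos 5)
Your proposal is correct and matches the paper's reasoning exactly: the paper declares the corollary an immediate consequence of Theorem~\ref{thm:bijstat}(a) together with the bijectivity of $\bij$ established in Section~\ref{sec:bij}, which is precisely the restriction argument you spell out. (The paper also offers an independent generating-function proof in Section~\ref{sec:gf}, but that is presented as an alternative, not the primary argument.)
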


\begin{example}
For  $m = 1, 2, 3$, the set $\{B\in\B: \sp(B) - \pk(B) = m\}$ consists of all bargraphs 
of semiperimeter $2, 3, 4$, respectively, since each of these bargraphs has $1$ peak. For $m = 4$, this set, drawn in Figure~\ref{fig:m4}, consists of all $13$ bargraphs of semiperimeter 5  (each of them having $1$ peak) and the only bargraph of semiperimeter 6 that has 2 peaks.
\end{example}

\begin{figure}[htb]
\centering
    \begin{tikzpicture}[scale=0.55]
    \draw(0,0) circle(1.2pt)  \N\H\H\H\H\S; \draw[thin,dotted](0,0)--(4,0);
    \draw(5,0) circle(1.2pt)  \N\H\H\N\H\S\S; \draw[thin,dotted](5,0)--(8,0);
    \draw(9,0) circle(1.2pt)  \N\H\N\H\S\H\S; \draw[thin,dotted](9,0)--(12,0);
    \draw(13,0) circle(1.2pt)  \N\N\H\S\H\H\S; \draw[thin,dotted](13,0)--(16,0);
    \draw(17,0) circle(1.2pt)  \N\H\N\H\H\S\S; \draw[thin,dotted](17,0)--(20,0);
    \draw(21,0) circle(1.2pt)  \N\N\H\H\S\H\S; \draw[thin,dotted](21,0)--(24,0);
    \draw(25,0) circle(1.2pt)  \N\N\H\H\H\S\S; \draw[thin,dotted](25,0)--(28,0);
    \draw(25,0) circle(1.2pt)  \N\N\H\H\H\S\S; \draw[thin,dotted](25,0)--(28,0);
\begin{scope}[shift={(1,-5)}]
    \draw(0,0) circle(1.2pt)  \N\H\N\N\H\S\S\S; \draw[thin,dotted](0,0)--(2,0);
    \draw(4,0) circle(1.2pt)  \N\N\N\H\S\S\H\S; \draw[thin,dotted](4,0)--(6,0);
    \draw(8,0) circle(1.2pt)  \N\N\H\N\H\S\S\S; \draw[thin,dotted](8,0)--(10,0);
    \draw(12,0) circle(1.2pt)  \N\N\N\H\S\H\S\S; \draw[thin,dotted](12,0)--(14,0);
    \draw(16,0) circle(1.2pt)  \N\N\N\H\H\S\S\S; \draw[thin,dotted](16,0)--(18,0);
    \draw(20,0) circle(1.2pt)  \N\N\N\N\H\S\S\S\S; \draw[thin,dotted](20,0)--(21,0);
     \draw(23,0) circle(1.2pt)  \N\N\H\S\H\N\H\S\S;  \draw[thin,dotted](23,0)--(26,0);
\end{scope}
    \end{tikzpicture}
   \caption{The 14 bargraphs with $\sp(B)-\pk(B)=4$.}\label{fig:m4}
\end{figure}
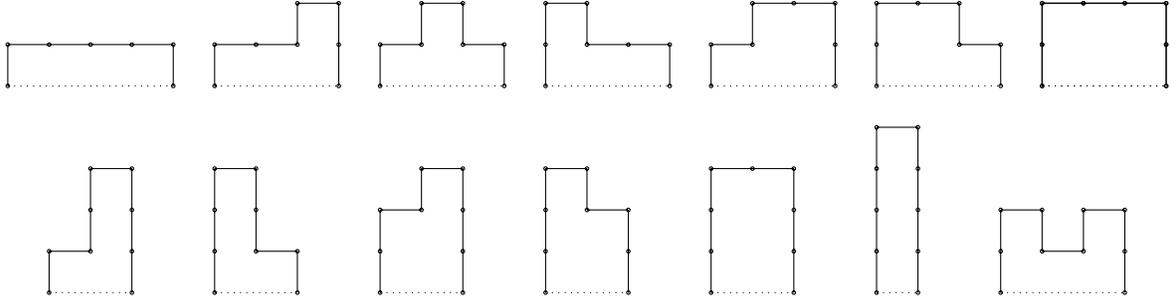

Table~\ref{tab:peaks} displays the number of bargraphs with a given semiperimeter (up to $12$) and a given number of peaks. Corollary~\ref{cor:Catalan} states that the diagonal sums of this table (which is the triangular sequence \cite[A271940]{OEIS}) are the Catalan numbers.
This observation for small values of $n$ led to the results in this paper.

\begin{table}[htb]
$$
\begin{array}{r|rrrr}
n\backslash k & 1 & 2 & 3 & 4\\ \hline
2 & 1 &&& \\ 
3 & 2 &&& \\ 
4 & 5 &&& \\ 
5 & 13 &&& \\ 
6 & 34 & 1 && \\ 
7 & 89 & 8 && \\ 
8 & 233 &42&& \\ 
9 & 610 &183 & 1 & \\ 
10 & 1597 &717 & 13 & \\ 
11 & 4181 &2622 & 102 & \\ 
12 & 10946 & 9134 & 624 & 1 \\ 
\end{array}
$$
\caption{Number of bargraphs with semiperimeter $n$ having $k$ peaks.
The diagonal sums of this table, which count bargraphs where  $n-k$ is fixed, are the Catalan numbers $1, 2, 5, 14, 42, 132, 429, \dots$.
}
\label{tab:peaks}
\end{table}

\medskip

We end this section by describing other properties of $\bij$, which can be used to give a recursive description of this map. Interpreting a bargraph $B$ as a lattice path, the path $UBD$ is the bargraph whose height sequence is obtained by adding one to each entry of the height sequence of $B$. On the other hand, given bargraphs $B$ and $B'$, the expression $B\circ B'$ denotes the concatenation of $B$ and $B'$ viewed as height sequences. 
Let  $\mathbf{1}$ denote the bargraph consisting of one column of height $1$.

\begin{proposition}
Let $P,P'\in\DP$, and let $B=\bij(P)$, $B'=\bij(P')$. Then 
\begin{enumerate}[(i)]
\item $\bij(uPd)=UBD$,
\item $\bij(PP')=\begin{cases} B\circ \mathbf{1} \circ B' & \text{if $\height(P)\ge2$ and $\height(P')\ge2$,}\\
 B\circ B' & \text{otherwise.}\end{cases}$
\end{enumerate}
\end{proposition}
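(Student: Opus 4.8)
The plan is to track how each of the two path operations propagates through the three stages of $\bij$: forming the height sequence of a Dyck path, decomposing it into maximal blocks of equal height, and converting each block of size $c$ into $\lfloor c/2\rfloor$ columns. Since both $uPd$ and $PP'$ alter the height sequence only near the ends of the pieces, I expect the interior block structure to be carried along unchanged, so that in each case only a bounded ``boundary'' region needs careful analysis.

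For part (i), I would first note that prepending $u$ and appending $d$ raises all of $P$ by one level, so the height sequence of $uPd$ is $1$, then the height sequence of $P$ with every entry increased by $1$, then $1$. For nonempty $P$ the height sequence of $P$ begins and ends with $1$, hence with $2$ after the shift; therefore the leading and trailing $1$'s each form a block of size $1$, contributing $\lfloor 1/2\rfloor=0$ columns. The remaining blocks are exactly the blocks of $P$ shifted up by one, so $\bij(uPd)$ consists of the columns of $B=\bij(P)$, each raised by one in height, which is precisely $UBD$. I would treat the degenerate case $P=\emptyset$, where the two boundary steps instead merge, separately by the stated conventions.

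For part (ii), I would use that the height sequence of $PP'$ is the concatenation of those of $P$ and $P'$, with no vertical shift since both return to the axis. Every block of $P$ other than its last, and every block of $P'$ other than its first, is then unaffected. The last block of $P$ and the first block of $P'$ are both at height $1$ (the paths end and begin at the axis), so they are adjacent blocks of equal height and merge into a single height-$1$ block whose size is the sum $a+b$ of the two sizes $a$ and $b$. Consequently the junction produces $\lfloor(a+b)/2\rfloor$ columns of height~$1$ in $\bij(PP')$, compared with $\lfloor a/2\rfloor+\lfloor b/2\rfloor$ in $B\circ B'$; these agree unless both $a$ and $b$ are odd, in which case $\bij(PP')$ has exactly one extra column of height~$1$, namely $\mathbf 1$.

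The main obstacle is pinning down the parities of $a$ and $b$, and for this I would reuse the fact established in the proof of part~(a): the height changes entering and leaving a block have the same sign if and only if the block has odd length. When $\height(P)\ge2$ the last block of $P$ is entered by $\downh$ (from height~$2$) and left by $\downh$ (to the axis), forcing $a$ odd; when $\height(P)=1$ the single block is entered by $\uph$ and left by $\downh$, forcing $a$ even. The symmetric argument shows $b$ is odd iff $\height(P')\ge2$. Hence both are odd, and the extra $\mathbf 1$ appears, exactly when $\height(P)\ge2$ and $\height(P')\ge2$, matching the two cases of~(ii); it then remains only to verify that the conventions for empty and height-one pieces are consistent with the stated formulas, which is routine.
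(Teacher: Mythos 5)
Your proof is correct and takes essentially the same route as the paper's: part (i) via the observation that elevation shifts the height sequence up by one and adds two singleton height-$1$ blocks contributing no columns, and part (ii) via the parity of the boundary blocks at height $1$, which is exactly the observation the paper cites. You simply make explicit the block-merging and $\lfloor\cdot\rfloor$ bookkeeping that the paper leaves implicit.
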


\begin{proof}
To prove part (i), note that elevating the path $P$ increases the entries of its height sequence by one, and adds a $1$ at the beginning and a $1$ at the end of it, neither of which contributes new columns to the bargraph.

Part (ii) follows from the observation that, if $P\in\DP$ has $\height(P)\ge2$, then the initial and final maximal blocks of $1$'s in the height sequence of $P$ have odd sizes.
\end{proof}

\section{Generating functions}\label{sec:gf}

In this subsection we give an alternative proof of Corollary~\ref{cor:Catalan} using generating functions. Let $$G(t,z)=\sum_{B\in\B} t^{\pk(B)} z^{\sp(B)}$$ be the generating function for bargraphs according to the number of peaks and the semiperimeter.
It was shown in~\cite{BBK_peaks} (and can be easily derived from~\cite{DE}) that $G$ satisfies
\begin{equation}\label{eq:G} z(1-z)G(t,z)^2-(1-3z+z^2+tz^3)G(t,z)+tz^2(1-z)=0.\end{equation}
Let $$H(t,z)=G(1/t,tz)=\sum_{B\in\B} t^{\sp(B)-\pk(B)} z^{\sp(B)}$$ be the generating function where $t$ keeps track of the semiperimeter minus the number of peaks. Then, substituting into~\eqref{eq:G}, we see that $H$ satisfies
\begin{equation}\label{eq:H} tz(1-tz)H(t,z)^2-(1-3tz+t^2z^2+t^2z^3)H(t,z)+tz^2(1-tz)=0.\end{equation}
Setting $z=1$ and simplifying, we get 
$$tH(t,1)^2-(1-2t)H(t,1)+t=0,$$
yielding $$H(t,1)=\sum_{B\in\B} t^{\sp(B)-\pk(B)} = \frac{1-\sqrt{1-4t}}{2t}-1=\sum_{m\ge1} C_m t^m.$$

\end{document}